\documentclass[11pt]{article}
%\documentclass[authoryear]{elsarticle}
%\journal{Statistics and Probability Letters}

\usepackage[margin=1.3in]{geometry}
\usepackage[utf8]{inputenc}
\usepackage[UKenglish]{babel}

\usepackage{amsmath}
\usepackage{amsthm}
\usepackage{amsfonts}
\usepackage{amssymb}

\newtheorem{theorem}{Theorem}
\newtheorem{lemma}{Lemma}
\newtheorem{corollary}{Corollary}

\newtheorem{conjecture}{Conjecture}

% Thomas packages

\usepackage{todonotes}
\usepackage{enumitem}
\usepackage{multicol}
\usepackage{hyperref}
\usepackage{cleveref}
\usepackage{autonum}
\DeclareMathAlphabet{\mathpzc}{OT1}{pzc}{m}{it}

% Allow page break in align
\allowdisplaybreaks

\usepackage{mathtools}
\DeclarePairedDelimiterX{\infdivx}[2]{(}{)}{#1 \,\delimsize\|\, #2}

% No space before footnotes
% https://tex.stackexchange.com/questions/308169
\let\oldfootnote\footnote
\def\footnote{\ifhmode\unskip\fi\oldfootnote}

% Jakob packages

\DeclareMathOperator*{\E}{E}
\newcommand{\Ep}[1]{\E[#1]}

\usepackage[round]{natbib}

\title{Sharp and Simple Bounds for the Raw\\ Moments of the Binomial and Poisson Distributions}
\author{Thomas D. Ahle
\,\,\,
thomas@ahle.dk
\\
University of Copenhagen, BARC, Facebook
}

\begin{document}

% If elsa, move maketitle to after abstract
\maketitle

\begin{abstract}
   We prove the inequality $\Ep{(X/\mu)^k}
   \le
   (\frac{k/\mu}{\log(1+k/\mu)})^k
   \le
   \exp(k^2/(2\mu))$ for sub-Poissonian random variables $X$, such as Binomially or Poisson distributed variables, with mean $\mu$.
   The asymptotic behaviour $\Ep{(X/\mu)^k}=1+O(k^2/\mu)$ 
   matches a lower bound of $1+\Omega(k^2/\mu)$
   for small $k^2/\mu$.
   This improves over previous uniform raw moment bounds by a factor exponential in $k$.
\end{abstract}

\section{Introduction}

Suppose we sample an urn of $n$ balls, each coloured \emph{red} with probability $p$ and otherwise \emph{blue}.
What is the probability that a sample of $k$ balls, with replacement, \emph{from this urn} consists of only red balls?
Such questions are of interest to sample-efficient statistics and the derandomisation of algorithms.

%Suppose we are given a random graph $G(n,p)$ and sample a sub-graph by picking $k$ edges with replacement.
%What is the probability that the sub-graph is empty?
%If we sampled a new random graph $G(n,p)$ for each subgraph, the probability would be $(1-p)^{k}$
%so $k=1/p$ is good.

If $R\sim \text{Binomial}(n,p)$ denotes the number of red balls in the urn, the probability of drawing a single red ball from the urn is $R/n$.
Thus, the probability that a sample of $k$ balls from the urn is all red is given by $(R/n)^k$, or $P=\Ep{(R/n)^k}$ when the probability is taken over both sample phases.
Whenever the urn is large ($n$ is large), $R/n$ concentrates around $p$, so sampling from the urn is equivalent to sampling from the original distribution and $P\approx p^k$.
Indeed, from Jensen's inequality, we can see that $p^k$ is always a lower bound: $P = \Ep{(R/n)^k} \ge \Ep{(R/n)}^k = p^k$.
Previous authors have shown a nearly matching upper bound of $C^k p^k$ in the range
$k/(np) = O(1)$ for some constant $C>1$.
(See \cref{eq:latala} below for details.)
In this note, we improve the upper bound to
$P \le p^k (1 + k/(2np))^k$,
which shows that when $k = o(\sqrt{np})$,
the factor $C^k$ can be replaced by just $1+o(1)$.

\subsection{Related work}

One direct approach to computing the Binomial moments expands them using the Stirling numbers of the second kind:
$
\Ep{X^k} = \sum_{i=0}^k{k\brace i}n^{\underline{i}}p^i
$,
where $n^{\underline{i}}=n(n-1)\cdots(n-i+1)$.
%is the $i$th ``falling factorial'' of $n$.
This equality can be derived as a sum of the much easier to compute ``factorial moments'', $\Ep{X^{\underline{k}}} = n^{\underline{k}}p^k$.
See \cite{knoblauch2008closed} for details.
%and \cite{skorski2020handy} for details.
Taking the leading two terms of the sum, one finds that
$
\Ep{X^k} = (np)^k\left(1 + \binom{k}{2}\frac{1-p}{np} + O(1/n^2)\right)
   $ as $ n\to\infty$. 
However, this approach does not work when $k$ is not constant with respect to $n$.
Similarly, for the Poisson distribution, the moments can be expressed as the so-called Bell (or \citeauthor{touchard1939cycles}) polynomials in $\mu$:
$\Ep{X^k} = \sum_{i=0}^k {k\brace i} \mu^i $.
This sum gives a simple lower bound $\Ep{X^k} \ge {k\brace k}\mu^k + {k\brace k-1}\mu^{k-1} = \mu^k( 1+ \frac{k(k-1)}{2\mu})$, matching our upper bound asymptotically when $k=O(\sqrt{\mu})$.
However, as in the Binomial case, the sum does not easily yield a uniform bound.
We give the details of both lower bounds in \Cref{sec:lower}.

A different approach uses the powerful results on moments of independent random variables by \cite{latala1997estimation} and \cite{pinelis1995optimum}.
In the case of Binomial and Poisson random variables, they yield:
\[
   \left(c\,\frac{k/\mu}{\log(1+k/\mu)}\right)^k
   \le
   \Ep{(X/\mu)^k}
   \le
   \left(C\,\frac{k/\mu}{\log(1+k/\mu)}\right)^k
   \label{eq:latala}
\]
for some universal constants $c < 1 < C$.
The bound is tight up to the factor $(C/c)^k$, which is negligible when the overall growth is $O(k^k)$.
However, when $k/\mu\to 0$, we expect the upper bound to be 1, and so the factor $C^k$ in the upper bound can be overwhelmingly large.

A third option is to use a Rosenthal bound, such as the following by \cite{berend2010improved},
\citep[see also][]{johnson1985best}:
\[
   \Ep{X^k} \le B_k \max\{\mu, \mu^k\}.
   \label{eq:max}
\]
Here, $B_k$ is the $k$th Bell number, which Berend and Tassa show satisfies the uniform bound $B_k < \big(\frac{0.792 k}{\log(k+1)}\big)^k$.
For large $k$, a precise asymptotic bound, $B_k^{1/k} = \frac{k}{e\log k}(1+o(1))$, is given by \citep[e.g.][]{de1981asymptotic, ibragimov1998exact}.
Unfortunately, the Rosenthal bound is incomparable to the other bounds in this paper when $\mu < 1$, as it grows with $\mu$ rather than $\mu^k$.
However, for $\mu\ge 1$ and integral, we show a matching asymptotic lower bound in the second half of \Cref{sec:lower}.
That indicates that the upper bound of this paper could be improved by a factor $e^{-k}$ for large $k$.

Finally, \cite{ostrovsky2017non} give another asymptotically sharp bound in a recent preprint.
Using a technique based on moment generating functions, similar to this paper, they bound the Bell polynomial, which as discussed above, is equivalent to bounding the moments of a Poisson random variable.
The bound holds when $k\ge2\mu$:
%\[
%   \Ep{(X/\mu)^k}^{1/k}
%   \le
%   \frac{k/e}{\log(k/\mu)-\log\log(k/\mu)}
%   \exp\!\left(\frac1{\log(k/\mu)}-\frac1{k/\mu}\right)
%   \quad\text{if }k\ge 2\mu,
%\]
%and
\[
   \Ep{(X/\mu)^k}^{1/k} \le \frac{k/\mu}{e\log(k/\mu)}\left(1 + C(\mu)\frac{\log\log(k/\mu)}{\log(k/\mu)}\right)
   \quad\text{if }k\ge 2\mu,
   \label{eq:ostrovsky}
\]
where $C(\mu)>0$ is some ``constant'' depending only on $\mu$.
In the range $k<2\mu$, \citeauthor{ostrovsky2017non} only gives the bound $\Ep{(X/\mu)^k}\le 8.9758^k$, so similarly to the other bounds presented, it loses an exponential factor in $k$ compared to \Cref{thm} below, for smaller $k$.

\section{Bounds}

The theorem considers ``sub-Poissonian'' random variables, which are variables $X$, satisfying the requirement $\Ep{\exp(t X)} \le \exp(\mu(e^t-1))$.
Such sub-Poissonian include many simple distributions, such as the Poisson or Binomial distribution.
We give more examples in~\Cref{sec:sub}.

\begin{theorem}\label{thm}
   Let $X$ be a non-negative random variable with mean $\mu>0$ and moment-generating function $\Ep{\exp(t X)}$ bounded by $\exp(\mu(e^t-1))$ for all $t>0$.
   Then for all $k > 0$ and any $\alpha>0$:
   \[
      \Ep{(X/\mu)^k} \le \left(\frac{k/\mu}{e^{1-\alpha}\log(1+\alpha k/\mu)}\right)^k.
   \]
\end{theorem}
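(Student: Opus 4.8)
The plan is to combine a pointwise inequality bounding $x^k$ by an exponential with the hypothesis on the moment-generating function, in the spirit of the Chernoff/Bennett method, and then to choose the free parameter $t$ not to optimize it exactly but to make the exponential factor cancel cleanly.

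First I would record the elementary fact that for every $u \ge 0$ one has $u^k e^{-u} \le (k/e)^k$, obtained by maximizing $u \mapsto u^k e^{-u}$ over $[0,\infty)$, the maximum being attained at $u = k$. Substituting $u = tx$ for a parameter $t > 0$ and a point $x \ge 0$ gives the pointwise bound
\[
   x^k \le \left(\frac{k}{e t}\right)^k e^{t x}, \qquad x \ge 0,\ t > 0.
\]
Applying this with $x = X$, taking expectations, and invoking the hypothesis $\E[e^{tX}] \le \exp(\mu(e^t - 1))$ then yields
\[
   \E[X^k] \le \left(\frac{k}{e t}\right)^k \exp\!\big(\mu(e^t - 1)\big) \qquad \text{for every } t > 0.
\]

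Now comes the one idea in the proof: rather than minimizing the right-hand side over $t$ — which leads to the transcendental equation $t e^t = k/\mu$ with a Lambert-$W$ solution and no clean closed form — I would choose $t = \log(1 + k/\mu)$, which is legitimate since $k/\mu > 0$. With this choice $e^t - 1 = k/\mu$, so $\mu(e^t-1) = k$ and the exponential factor becomes exactly $e^k$, cancelling the $e^{-k}$ hidden inside $(k/(et))^k$. The bound collapses to $\E[X^k] \le (k/t)^k = \big(k/\log(1+k/\mu)\big)^k$, which is the claim.

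Given how short the argument is, there is no serious obstacle; the points to be careful about are the direction of the pointwise inequality and the fact that $k$ is an arbitrary positive real, so that the maximization of $u^k e^{-u}$ must be carried out by calculus rather than by expanding a factorial. It is worth remarking that the chosen $t$ is in fact slightly larger than the true optimizer — since $\log(1+x) \ge x/(1+x)$ forces the derivative of the log of the bound to be non-negative there — so a marginally smaller estimate is available, but the stated form is the one that is simultaneously clean and, by the lower bounds discussed above, asymptotically sharp.
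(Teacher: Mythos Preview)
Your proof is correct and, for the theorem as stated, genuinely simpler than the paper's. Both arguments start from the same pointwise/MGF bound
\[
   \E[X^k] \le \left(\frac{k}{et}\right)^k \exp\!\big(\mu(e^t-1)\big),\qquad t>0,
\]
but diverge in how $t$ is chosen. The paper takes the exact minimizer $t=W(k/\mu)$, obtaining the intermediate bound $\E[(X/\mu)^k]\le\exp\!\big(k f(k/\mu)\big)$ with $f(B)=1/W(B)+W(B)-1-1/B$, and must then work to show this is at most $\big(B/\log(1+B)\big)^k$; that step is carried out by a monotonicity argument requiring a sharp Lambert-$W$ inequality (the Hoorfar--Hassani lemma). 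You instead plug in the slightly suboptimal value $t=\log(1+k/\mu)$, which makes $\mu(e^t-1)=k$ and delivers the stated bound in one line, bypassing Lambert-$W$ entirely. What the paper's route buys is the tighter intermediate estimate $\exp(kf(k/\mu))$, which it later uses as a benchmark when discussing the conjectured Bell-number bounds; your route buys a proof that fits in a paragraph and needs no auxiliary lemma.
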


The theorem has a free parameter, $\alpha$, which is optimally set such that $1+\alpha k/\mu = e^{W(k/\mu)}$, where $W$ is the Lambert-W function, which is defined by $W(x)e^{W(x)}=x$.
\footnote{
   The Lambert-W function has multiple branches.
   We always refer to the main one (sometimes called the 0th), in which $W(x)$ and $x$ are both positive.
}
In practice the following two corollaries may be easier to work with.
\begin{corollary}\label{cor:up1}
   \[
      \Ep{(X/\mu)^k} \le
      \left(\frac{k/\mu}{\log(1+k/\mu)}\right)^k
      \le
      \left(1+\frac{k}{2\mu}\right)^k
      \le \exp\!\left(\frac{k^2}{2\mu}\right)
   .\]
\end{corollary}
\begin{proof}
   For the first inequality, set $\alpha=1$ in \Cref{thm}.
   The second bound, we use a standard logarithmic inequality, $\frac{x}{\log(1+x)}\le 1+x/2$ \citep[see e.g.][eq. 6]{topsoe2007some}.
   The last bound is the standard $1+x\le \exp(x)$.
\end{proof}
In the range $k=O(\sqrt{\mu})$
we show a matching lower bound of $1+\Omega(k^2/\mu)$ in \Cref{sec:lower}, \cref{eq:lower1}.
\begin{corollary}\label{cor:up2}
   Let $x=k/\mu$, then
   \[
      \Ep{(X/\mu)^k}^{1/k}
      \le
      \frac{x\, e^{1/\log(e+x)}}{e\log (1+x/\log(e+x))}
      =
      \frac{x}{e\log x}\left(1+O\!\left(\frac{\log\log x}{\log x}\right)\right)
      \quad
      \text{as $x\to\infty$.}
      \label{eq:loglog-lower}
   \]
\end{corollary}
\begin{proof}
   Take $\alpha=1/\log(e+x)$.
   For $x>0$ we have $\log(e+x)>0$ and so $\alpha>0$ as required by \Cref{thm}.
\end{proof}
\Cref{cor:up2} matches our lower bound in \cref{eq:lower-bound}, as well as \citeauthor{ostrovsky2017non} in \cref{eq:ostrovsky}, but without the restriction on the range of $k/\mu$.

\subsection{The proof}

Technically our bound is shown using the moment-generating function and some new sharp inequalities involving the Lambert-W function.
We will use the following lemma:
\begin{lemma}[\citealp*{hoorfar2008inequalities}]\label{lem:hoorfar}
   For all $y>1/e$ and $x > -1/e$,
   \[
      e^{W(x)} \le \frac{x+y}{1+\log y}.
      \label{eq:hoorfar}
   \]
\end{lemma}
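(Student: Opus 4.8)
The plan is to clear the (positive) denominator and then collapse the two free parameters into a single variable, reducing \eqref{eq:hoorfar} to a classical one-line inequality. The hypotheses will enter only twice, and in transparent ways.

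First I would record that $y > 1/e$ exists precisely to make the denominator positive: it gives $\log y > -1$, hence $1 + \log y > 0$. Multiplying through by this positive quantity, the claim $e^{W(x)} \le \frac{x+y}{1+\log y}$ is equivalent to
\[
   e^{W(x)}\br{1 + \log y} \le x + y.
\]
The second hypothesis $x > -1/e$ guarantees that $W(x)$ is well-defined and real on the principal branch, so that the defining relation $x = W(x)e^{W(x)}$ may legitimately be substituted.

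Next I would use that relation. Writing $w = W(x)$, so that $x = we^{w}$ and $e^{W(x)} = e^{w}$, the displayed inequality becomes $e^{w}(1 + \log y) \le we^{w} + y$, that is
\[
   e^{w}\br{1 + \log y - w} \le y.
\]
Now comes the key move: set $u = w - \log y$, equivalently factor $e^{w} = y\,e^{u}$ via $y = e^{\log y}$. Since $1 + \log y - w = 1 - u$, the left-hand side becomes $y\,e^{u}(1-u)$, and dividing by the positive quantity $y$ reduces the entire statement to the single-variable inequality
\[
   e^{u}(1 - u) \le 1 \qquad \text{for all } u \in \R.
\]

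Finally I would dispatch this last inequality, which is entirely elementary: it is exactly the ubiquitous bound $1 + v \le e^{v}$ (take $v = -u$). Alternatively one checks directly that $g(u) = e^{u}(1-u)$ has $g'(u) = -u\,e^{u}$, so $g$ increases on $(-\infty,0)$, decreases on $(0,\infty)$, and attains its global maximum $g(0) = 1$; hence $g(u) \le 1$ everywhere, with equality only at $u = 0$. Because this final inequality holds for \emph{every} real $u$, the chain of reductions needs no restriction on the range of $u$, and the only uses of the hypotheses are the two noted above. I do not anticipate a genuine obstacle here; the one thing to get right is the bookkeeping of the substitution $u = w - \log y$, and the observation that clearing the denominator is valid precisely under $y > 1/e$.
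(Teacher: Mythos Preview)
Your proof is correct and follows essentially the same route as the paper's: both reduce \eqref{eq:hoorfar} via the substitution $t=W(x)$ (your $w$) and the change of variable $u=w-\log y$ to the elementary inequality $1+v\le e^{v}$. The only difference is direction---the paper argues forward from $1+t\le e^{t}$ while you argue backward to it---and you are more explicit about where each hypothesis is invoked.
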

We present an elementary proof of this fact for completeness:
\begin{proof}
   Starting from $1+t\le e^t$, substitute $\log(y)-t$ for $t$ to get
   $1+\log y - t \le y e^{-t}$.
   Multiplying by $e^t$ we get $e^t(1+\log y) \le t e^t + y$.
   Let $t=W(x)$ s.t. $t e^t=x$.
   Rearranging, we get~\cref{eq:hoorfar}.
\end{proof}
Taking $y=e^{W(x)}$ in \cref{eq:hoorfar} makes the two sides equal,
so we can think of \Cref{lem:hoorfar} as a way to turn a rough estimate into an upper bound.
%Hoorfar and Hassani make various substitutions, resulting in different bounds useful when $x\to\infty$.
%We will use the bound differently, focusing on having the right asymptotics as $x\to 0$.
\vspace{.5em}

We apply \Cref{lem:hoorfar} to show a new bound on $W(x)$ in a similar style.
This lemma will be the main ingredient in proving \Cref{thm}.
\begin{lemma}\label{lem:W-bound}
   For all $y>1$ and $x > 0$,
   \[
      \frac1{W(x)}+W(x)
      \le
    \frac{y}{x} +
    \log\!\left(\frac{x}{\log y}\right),
   \]
   with equality if $y=e^{W(x)}$.
\end{lemma}
\begin{proof}
   The proof uses the identities $W(x)=\log(\tfrac{x}{W(x)})$ and $\frac{1}{W(x)}=\frac1x\exp(W(x))$ which are simple rewritings of the definition $W(x)e^{W(x)}=x$.
   The main idea is to introduce a new variable $z>0$, to be determined later, which allows us to control the effect of applying the logarithmic inequality $\log x \ge 1-1/x$.
   We also use \Cref{lem:hoorfar}, which introduces another new variable $y>1$ to be determined.

   We bound:
   \begin{align}
      \frac1{W(x)}+W(x)
      &=
      \frac1{W(x)}+\log\!\left(\frac{x}{W(x)}\right)
    \\&=
      \frac1{W(x)}+\log\!\left(\frac{x}{z}\right)-\log\!\left(\frac{W(x)}{z}\right)
    \\&\le
    \frac1{W(x)}+\log\!\left(\frac{x}{z}\right)-\left(1-\frac{z}{W(x)}\right)
    \\&=
    \frac{1+z}{W(x)}-1+\log\!\left(\frac{x}{z}\right)
    \\&=
    e^{W(x)}\frac{1+z}{x}-1+\log\!\left(\frac{x}{z}\right)
    \\&\le
    \frac{x+y}{1+\log(y)}\frac{1+z}{x}
    -1+\log\!\left(\frac{x}{z}\right).
    \\&=
    \frac{y}{x} +
    \log\!\left(\frac{x}{\log y}\right).
   \end{align}

   Here the last two steps come from
   the inequality \cref{eq:hoorfar} in its general form,
   and the substitution $z=\log y$.
   We can check that equality follows all the way through if we let $y=e^{W(x)}$.
\end{proof}

We are now ready to prove the main theorem of the paper:
\begin{proof}[Proof of \Cref{thm}.]
   Let $m(t) = \Ep{\exp(t X)}$ be the moment-generating function.
   We will bound the moments of $X$ by
   \[
      \Ep{X^k} \le m(t)\left(\frac{k}{et}\right)^k,
      \label{eq:bound1}
   \]
   which holds for all $k\ge 0$ and $t>0$.
   This follows from the basic inequality $1+z\le e^z$, where we substitute $tz/k-1$ for $z$ to get $tz/k \le e^{tz/k-1} \implies z^k \le e^{tz}(k/(et))^k$.
   Letting $z=X$ and taking expectations, we get \cref{eq:bound1}.

   We now define $x=k/\mu$ and take $t$ such that $t e^t=x$.
   In the notation of the Lambert-W function, this means $t=W(x)$.
   We note that $t>0$ whenever $x>0$.
   We proceed to bound the moments of $X/\mu$ using \cref{eq:bound1}:
   \begin{align}
      \Ep{(X/\mu)^k}
      &\le
      m(t)\left(\frac{k}{et}\right)^k\mu^{-k}
      \\&\le
      \exp\!\big(\mu(e^t-1)\big)\left(\frac{k}{e \mu t}\right)^k
      \\&=
      \exp\!\big(\mu(x/t-1)\big)\left(\frac{e^t}{e}\right)^k
      \label{eq:subs}
      \\&=
      \exp\!\big((k/x)(x/t-1)+k(t-1)\big)
      \\&= \exp(k f(x))
      \label{eq:exp(kfB)}
      ,
   \end{align}
   where we define $f(x) \coloneqq 1/t-1/x+t-1$.
   Here \cref{eq:subs} came from the simple rewriting of the definition of $t$, $1/t=e^t/x$

   We continue to bound $f(x)$ using \Cref{lem:W-bound}:
   \begin{align}
      f(x)
      &=
      \frac1{W(x)} + W(x) - 1 - \frac1x
      \\&\le
      \frac{y}{x} + \log\left(\frac{x}{\log y}\right)
       - 1 - \frac1x
      \\&=
      \alpha-1 + \log\!\left(\frac{x}{\log(1+\alpha x)}\right),
      \label{eq:bound2}
   \end{align}
   taking $y=1+\alpha x$, which is greater than 1 when $\alpha$ and $x$ are both greather than 0.

   Backing up, we have shown
   \[
      \Ep{(X/\mu)^k} \le \exp(kf(x)) \le \left(\frac{x}{e^{1-\alpha}\log(1+\alpha x)}\right)^k,
   \]
   which finishes the proof.
\end{proof}

\subsection{Lower bound}\label{sec:lower}

As mentioned in the introduction, the expansion for the Poisson moments
$\Ep{X^k} = \sum_{i=0}^k {k\brace i} \mu^i$
gives a simple lower bound by taking the two highest terms.
We note that ${k\brace k}=1$ and ${k\brace k-1}=\binom{k}{2}$
to get
\[
   \Ep{X^k} \ge \mu^k \left(1 + \frac{k(k-1)}{2\mu}\right),
   \label{eq:lower1}
\] matching \Cref{thm} asymptotically for $k=O(\sqrt{\mu})$.

The expansion for Binomial moments
$\Ep{X^k} = \sum_{i=0}^k {k\brace i} n^{\underline{i}} p^i$
yields a similar lower bound
\begin{align}
   \Ep{X^k}
   &\ge
   n^{\underline{k}}p^k
   + \binom{k}{2}n^{\underline{k-1}}p^{k-1}
 \\&=
 (np)^k
   \left(\frac{n^{\underline{k}}}{n^k}\right)
   \left(1 + \binom{k}{2}\frac{1}{(n-k+1)p}\right)
 \\&=
 (np)^k
   \left(\prod_{i=0}^{k-1}1-\frac{i}{n}\right)
   \left(1 + \binom{k}{2}\frac{1}{(n-k+1)p}\right)
 \\&\ge
 (np)^k
   \left(1-\binom{k}{2}\frac1n\right)
   \left(1 + \binom{k}{2}\frac{1}{np}\right)
 \\&=
 (np)^k
   \left(1 + \binom{k}{2}\frac{1-p}{np}
      \left(1 -\binom{k}{2}\frac{1}{n}\right)
   \right),
\end{align}
which matches \Cref{thm} for $k=O(\sqrt{\mu})$ and $p$ not too close to 1.

\vspace{.5em}

We will investigate some more precise lower bounds as $k/\mu$ gets large.
As mentioned briefly in the introduction, there is a correspondence between the moments of a Poisson random variable and the Bell polynomials defined by $B(k,\mu) = \sum_i{k\brace i}\mu^i$.
In particular, $\Ep{X^k} = B(k,\mu)$, if $\mu$ is the mean of the Poissonian random variable.
The Bell polynomials are so named because $B(k,1)$ is the $k$th Bell number.
By Dobiński's formula $B(k,1) = \frac{1}{e}\sum_{i=0}^\infty \frac{i^k}{i!}$ the Bell numbers are generalised for real $k$.
We write these as $B_x=B(x,1)$.

We give a lower bound for $\Ep{(X/\mu)^k}$ by showing the following simple connection between the Bell polynomials and Bell numbers:

\begin{theorem}\label{thm:lower}
   Let $k$ be a positive real number and $\mu\ge 1$ be an integer.
   Then
   \[
      B(k,\mu)/\mu^k
      \ge
      B_{k/\mu}^{\mu}.
   \]
\end{theorem}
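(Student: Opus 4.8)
The plan is to exploit the combinatorial meaning of the Bell polynomials. Recall that $B(k,\mu) = \sum_{i}{k\brace i}\mu^i$ counts, for integer $k$, the ways to partition a $k$-element set into blocks and then colour each block with one of $\mu$ colours; equivalently it is the number of functions-up-to-a-certain-equivalence, and most usefully it is $\E Y^k$ for $Y\sim\mathrm{Poisson}(\mu)$. Since the paper wants the statement for real $k$ as well, I would work from the Dobiński-type representation $B(k,\mu)=e^{-\mu}\sum_{j\ge 0}\frac{\mu^j}{j!}j^k$, which makes sense for all real $k>0$ and specialises to $B_x=B(x,1)$ at $\mu=1$. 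So the inequality to prove is
\[
   e^{-\mu}\sum_{j\ge 0}\frac{\mu^j}{j!}\,j^k \;\ge\; \mu^k\left(e^{-1}\sum_{j\ge 0}\frac{1}{j!}\,j^{k/\mu}\right)^{\!\mu}.
\]

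The cleanest route is probabilistic: write the left side as $\E Y^k$ with $Y\sim\mathrm{Poisson}(\mu)$, and now use that a $\mathrm{Poisson}(\mu)$ variable (for integer $\mu$) is a sum $Y = Z_1+\cdots+Z_\mu$ of i.i.d.\ $\mathrm{Poisson}(1)$ variables. Then $\E Y^k = \E\big[(Z_1+\cdots+Z_\mu)^k\big]$, and I want to compare this with $\mu^k\,\big(\E Z_1^{k/\mu}\big)^\mu = \mu^k\,B_{k/\mu}^\mu$. The key step is the elementary pointwise inequality: for non-negative reals $z_1,\dots,z_\mu$ and any $k>0$,
\[
   (z_1+\cdots+z_\mu)^k \;\ge\; \mu^k\,(z_1 z_2\cdots z_\mu)^{k/\mu},
\]
which is just the AM–GM inequality $\frac{z_1+\cdots+z_\mu}{\mu}\ge (z_1\cdots z_\mu)^{1/\mu}$ raised to the power $k>0$. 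Taking expectations, using independence on the right, gives
\[
   \E Y^k \;\ge\; \mu^k\,\E\!\big[(Z_1\cdots Z_\mu)^{k/\mu}\big] \;=\; \mu^k\prod_{i=1}^{\mu}\E\big[Z_i^{k/\mu}\big] \;=\; \mu^k\,\big(\E Z_1^{k/\mu}\big)^{\mu} \;=\; \mu^k\,B_{k/\mu}^{\mu},
\]
which is exactly the claim after dividing by $\mu^k$. This handles all real $k>0$ at once, since the moment $\E Z_1^{s}=B_s$ is defined for all real $s>0$ via Dobiński.

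The only genuine subtlety — and the step I would be most careful about — is justifying that $\E[Z_i^{k/\mu}]$ is finite and that the expectation of the product factorises as a product of expectations; finiteness is immediate because all Poisson moments (including fractional ones, via the absolutely convergent Dobiński series $e^{-1}\sum_j j^s/j!$) are finite, and factorisation is just independence of the $Z_i$. One should also note that integrality of $\mu$ is used exactly once, to realise $\mathrm{Poisson}(\mu)$ as a sum of $\mu$ i.i.d.\ $\mathrm{Poisson}(1)$'s; this is why the hypothesis "$\mu\ge 1$ an integer" appears. If one prefers to avoid probability entirely, the same argument can be written purely in terms of the Dobiński series: expand $\big(e^{-1}\sum_j j^{k/\mu}/j!\big)^\mu$ as a $\mu$-fold sum over $(j_1,\dots,j_\mu)$, apply AM–GM termwise to $(j_1\cdots j_\mu)^{k/\mu}\le \big(\frac{j_1+\cdots+j_\mu}{\mu}\big)^k$, and recognise the resulting sum, after the substitution $j=j_1+\cdots+j_\mu$ and the Vandermonde/multinomial identity $\sum_{j_1+\cdots+j_\mu=j}\prod_i \frac{1}{j_i!}=\frac{\mu^j}{j!}$, as $\mu^{-k}e^{-\mu}\sum_j \frac{\mu^j}{j!}j^k = \mu^{-k}B(k,\mu)$. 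I would present the probabilistic version in the main text for brevity and perhaps remark on the purely combinatorial translation.
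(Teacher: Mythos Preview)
Your proof is correct and is essentially identical to the paper's: both decompose a $\mathrm{Poisson}(\mu)$ variable as a sum of $\mu$ i.i.d.\ $\mathrm{Poisson}(1)$ variables, apply AM--GM pointwise, and then factor the expectation using independence. The paper phrases the chain in $L^k$-norm notation $\|X\|_k=(\E X^k)^{1/k}$ rather than working directly with raw moments, but the underlying argument is the same; your additional remark on the purely series-based translation via the multinomial identity is a nice supplement not present in the paper.
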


While the proof below assumes $\mu$ is an integer, we will conjecture~\Cref{thm:lower} to be true for any $\mu\ge1$.
Now by \citeauthor{de1981asymptotic}'s \citeyearpar{de1981asymptotic} asymptotic expression for the Bell numbers:
\[
   \Ep{(X/\mu)^k}^{1/k}\ge
   B_{k/\mu}^{\mu/k}
   = \frac{k/\mu}{e\log(k/\mu)}\left(1+\Theta\left(\frac{\log\log(k/\mu)}{\log(k/\mu)}\right)\right)
   \quad\text{as } k/\mu\to\infty.
   \label{eq:lower-bound}
\]
matching our upper bound, \cref{eq:loglog-lower}, the upper bound of Ostrovsky and Sirota, \cref{eq:ostrovsky}, for large $k$, as well as Latała's uniform lower bound with a different constant.

\begin{proof}[Proof of \Cref{thm:lower}]
   Let $X, X_1, \dots, X_\mu$ be i.i.d. Poisson variables with mean $1$,
   then $S=\sum_{i=1}^\mu X_i$ is Poisson with mean $\mu$.
   We write $\|X\|_k = \Ep{X^k}^{1/k}$.
   Then by the AG inequality:
   \[
      \|S/\mu\|_k
      =
      \left\|\frac{1}{\mu}\sum_{i=1}^\mu X_i\right\|_k
      \ge
      \left\|\bigg(\prod_{i=1}^\mu X_i\bigg)^{1/\mu}\right\|_k
      =
      \left\|\prod_{i=1}^\mu X_i\right\|^{1/\mu}_{k/\mu}
      =
      \left(\prod_{i=1}^\mu\|X_i\|_{k/\mu}\right)^{1/\mu}
      =
      \|X\|_{k/\mu}.
      \label{eq:ag}
   \]
   Since $X$ has mean 1 we have $\|X\|_{k/\mu} = B_{k/\mu}^{\mu/k}$,
   and as $S$ has mean $\mu$ we have $\|S/\mu\|_k = B(k,\mu)^{1/k}/\mu$.
   Thus, taking $k$th powers, \cref{eq:ag} is what we wanted to show.
\end{proof}

For small $k/\mu$ this bound is less interesting since $B_{x}\to 0$ as $x\to 0$, rather than 1 as our upper bound.
However, it is pretty tight, as we conjecture by the following matching upper bound in terms of the Bell numbers:
\begin{conjecture}
   For all $k>0$ and $\mu\ge 1$,
\[
   B_{k/\mu}^{1/(k/\mu)}
   \le
   \frac{B(k,\mu)^{1/k}}{\mu}
   \le
   B_{k/\mu+1}^{1/(k/\mu+1)}
   .
\]
Furthermore, for $0<\mu\le 1$, $
   \frac{B(k,\mu)^{1/k}}{\mu}
   \le
   B_{k/\mu}^{1/(k/\mu)}
   $.
\end{conjecture}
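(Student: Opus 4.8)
The plan is to reduce the conjecture, as far as possible, to what \Cref{thm} and \Cref{thm:lower} already deliver, and to isolate the two pieces of genuinely new content. Write $Z_\mu$ for a $\mathrm{Poisson}(\mu)$ variable, $Y=Z_1$, and $\|W\|_p=(\E[W^p])^{1/p}$, so that $B(k,\mu)^{1/k}/\mu=\|Z_\mu/\mu\|_k$ and $B_s^{1/s}=\|Y\|_s$. The two conjectured bounds on $\|Z_\mu/\mu\|_k$ are then exactly one Lyapunov step apart, since $p\mapsto\|Y\|_p$ is non-decreasing. Moreover the proof of \Cref{thm} — precisely display \eqref{eq:exp(kfB)} — shows that for \emph{every} $\mu>0$ and $k>0$,
\[
   \|Z_\mu/\mu\|_k\ \le\ e^{f(k/\mu)},\qquad f(r)=\tfrac1{W(r)}+W(r)-1-\tfrac1r ,
\]
and at $\mu=1$ this is the bound $B_r^{1/r}=\|Y\|_r\le e^{f(r)}$. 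So $e^{f(r)}$ sits between the two conjectured quantities whenever $r=k/\mu$ is fixed, and the whole conjecture follows once we know (i) $e^{f(r)}\le B_{r+1}^{1/(r+1)}$ for all $r>0$, and (ii) the map $\mu\mapsto\|Z_\mu/\mu\|_{r\mu}$ is non-decreasing for each fixed $r$.

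For (i) — which is exactly the upper bound when $\mu\ge1$ — the statement is purely analytic: $1/W(r)+W(r)-1-1/r\le\frac1{r+1}\log B_{r+1}$. I would prove it by first establishing a uniform lower bound on the generalised Bell numbers $B_s=e^{-1}\sum_{j\ge0}j^s/j!$, either via a saddle-point estimate retaining the dominant term near $j\approx e^{W(s)}$ (controlled by Stirling) or via an interpolation bound from log-convexity of $s\mapsto\log B_s$ anchored at the integer Bell numbers, and then comparing the resulting expression with the Lambert-W expression $f$. \emph{The main obstacle here is that the two sides agree asymptotically, both $\sim r/(e\log r)$}: the Bell lower bound must be tight to sub-exponential order with error terms controlled uniformly in $r$, which is exactly the kind of delicate Lambert-W/logarithm estimate that appears at the end of the proof of \Cref{thm}.

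For (ii) — which yields the lower bound for non-integer $\mu\ge1$ as well as the $\mu\le1$ case, since $\Phi_r(\mu):=\log\|Z_\mu/\mu\|_{r\mu}$ satisfies $\Phi_r(1)=\tfrac1r\log B_r$ and \Cref{thm:lower} is exactly $\Phi_r(n)\ge\Phi_r(1)$ for integer $n$ — I would differentiate. Using $\partial_\mu\E[h(Z_\mu)]=\E[h(Z_\mu+1)-h(Z_\mu)]$ and the Poisson identity $\E[(Z_\mu+1)^k]=\E[Z_\mu^{k+1}]/\mu$, one finds after simplification that $\Phi_r'(\mu)\ge0$ is equivalent, with $k=r\mu$, to
\[
   k\,\E_\nu[\log Z_\mu]+\E_\nu[Z_\mu]\ \ge\ \log\E[Z_\mu^k]+k+\mu ,
\]
where $\nu$ is the tilted law with $d\nu/d\,\mathrm{Poisson}(\mu)\propto z^k$. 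Since $\D{\nu}{\mathrm{Poisson}(\mu)}=k\,\E_\nu[\log Z_\mu]-\log\E[Z_\mu^k]$ and $\E_\nu[Z_\mu]=B(k+1,\mu)/B(k,\mu)$, this is the same as
\[
   \D{\nu}{\mathrm{Poisson}(\mu)}\ \ge\ k+\mu-\E_\nu[Z_\mu] .
\]
\emph{The main obstacle is this last inequality}: it is sharp, both sides behaving like $\mu\,(r+1-e^{W(r)})$ as $\mu\to\infty$, so crude estimates are useless (it is only trivially true in the small-$\mu$ regime, where the right-hand side is negative). I would use the second-order representation $\D{\nu}{\mathrm{Poisson}(\mu)}=k^2\int_0^1 u\,\mathrm{Var}_{\nu_u}(\log Z_\mu)\,du$ along the exponential family $d\nu_u\propto z^{ku}\,d\,\mathrm{Poisson}(\mu)$, together with the matching identity $\E_\nu[Z_\mu]-\mu=k\int_0^1\mathrm{Cov}_{\nu_u}(Z_\mu,\log Z_\mu)\,du$, and control the two integrals against each other uniformly in $\mu$.

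Finally, a word on a shortcut that does not quite work. The function $G_r(\mu)=r\mu\,\Phi_r(\mu)=\log B(r\mu,\mu)-r\mu\log\mu$ is superadditive, $G_r(\mu_1+\mu_2)\ge G_r(\mu_1)+G_r(\mu_2)$, by weighted AM--GM applied to $Z_{\mu_1+\mu_2}=Z_{\mu_1}+Z_{\mu_2}$ and independence; this re-derives \Cref{thm:lower}. But superadditivity alone does not force $G_r(\mu)\ge\mu\,G_r(1)$ for non-integer $\mu\ge1$ — for instance $G(x)=-\sqrt{x}$ is superadditive yet $G(\mu)/\mu<G(1)$ for $\mu>1$ — so the monotonicity in (ii) really does seem to require the differential inequality above.
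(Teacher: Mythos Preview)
The paper does not prove this statement --- it is explicitly labelled a \emph{Conjecture}, and the surrounding text offers only numerical evidence. So there is no paper proof to compare against. That said, your proposed route for the upper bound contains a genuine error that the paper itself flags.

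Your plan for the upper bound ($\mu\ge1$) is to sandwich via the mgf estimate: first $\|Z_\mu/\mu\|_k\le e^{f(k/\mu)}$ from \eqref{eq:exp(kfB)}, then $e^{f(r)}\le B_{r+1}^{1/(r+1)}$ for all $r>0$ (your step (i)). But the paper states, immediately after the conjecture, that the conjectured upper bound ``can't follow from our moment-generating function bound \eqref{eq:exp(kfB)}, since it drops below that for $k/\mu$ bigger than 40.'' In other words, $B_{r+1}^{1/(r+1)}<e^{f(r)}$ once $r\gtrsim 40$, so your inequality (i) is simply false in that range. The mgf bound is too lossy to reach the conjectured upper bound; any proof must exploit the Poisson structure more directly than the sub-Poissonian envelope does. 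Your own remark that ``the two sides agree asymptotically'' was a warning sign here --- in fact they cross.

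Your monotonicity programme (ii) for the lower bound and the $\mu\le1$ case is more promising and would genuinely extend \Cref{thm:lower} beyond integer $\mu$. You correctly derive the differential criterion and note its sharpness; but as you acknowledge, the core inequality $\D{\nu}{\mathrm{Poisson}(\mu)}\ge k+\mu-\E_\nu[Z_\mu]$ remains unproven, and the paper offers no help. So the proposal is a reasonable outline for the lower half of the conjecture but, as it stands, establishes no part of it.
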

While the upper bound appears true numerically, it can't follow from our moment-generating function bound \cref{eq:exp(kfB)}, since it drops below that for $k/\mu$ bigger than 40.
The conjectured upper bound is even incomparable with our \Cref{thm}, since it is slightly above $\frac{k/\mu}{\log(1+k/\mu)}$ for very small $k/\mu$.
The conjectured bound is weaker than \cref{eq:max} by \cite{berend2010improved} in the region $k<2$ and $\mu<1$, but for all other parameters, it is substantially tighter.

\section{Sub-Poissonian Random Variables}\label{sec:sub}
We call a non-negative random variable $X$ sub-Poissonian if $\Ep{X}=\mu$ and the moment-generating function, mgf., $\Ep{\exp(t X)} \le \exp(\mu(e^t-1))$ for all $t>0$.
We will briefly show that this notion includes all sums of bounded random variables, such as the Binomial distribution.

If $X_1, \dots, X_n$ are sub-Poissonian with mgf. $m_1(t), \dots, m_n(t)$ and mean $\mu_1, \dots, \mu_n$ respectively, then $\sum_i X_i$ is sub-Poissonian as well, since
\[
   \E\!\big[\exp\!\big(t\sum_i X_i\big)\big] =
   \prod_i m_i(t)
   \le \prod_i \exp\!\left(\mu_i(e^t-1)\right)
   = \exp\!\Big(\big(\sum_i \mu_i\big)\big(e^t-1\big)\Big).
\]
Next, a random variable bounded in $[0,1]$ with mean $\mu$ has mgf.
\[
   \Ep{\exp(t X)}
   = 1 + \sum_{k=1}^\infty \frac{t^k \Ep{X^k}}{k!}
   \le 1 + \mu \sum_{k=1}^\infty \frac{t^k \Ep{1^{k-1}}}{k!}
   = 1 + \mu(e^t-1)
   \le \exp(\mu(e^t-1)).
\]
Hence if $X=X_1+\dots+X_n$ where each $X_i\in[0,1]$ we have $\mu=\Ep{X}=\sum_i \Ep{X_i}$ and by \Cref{thm}
that $\Ep{(X/\mu)^k} \le \frac{k/\mu}{\log(k/\mu+1)}$.
In particular this captures sum of Bernoulli variables with distinct probabilities.

An example of a non-sub-Poissonian distribution is the geometric distribution with mean $\mu$.
This has moment generating function $m(t)=\frac{1}{1-\mu(e^t-1)}$, which is larger than $\exp(\mu(e^t-1))$ for all $t>0$.
However, likely, similar methods to those in the proof of \Cref{thm} will still apply to bound its moments.

\section{Acknowledgements}
The author would like to thank Robert E. Gaunt for his encouragement and helpful suggestions.

\bibliographystyle{plainnat}
\bibliography{bi-moments}

\end{document}